\newtheorem{proposition}{Proposition}[section]
\newtheorem{lemma}[proposition]{Lemma}
\newtheorem{theorem}[proposition]{Theorem}
\newtheorem{corollary}[proposition]{Corollary}
\def\l{{\langle}}
\def\r{\rangle}
\def\R{{\mathbb R}}
\def\S{{\mathbb S}}
\def\E{{\mathbb E}}
\def\P{{\mathbb P}}
\makeatletter \@addtoreset{equation}{section} \makeatother
\newcommand {\qed}%
{%
    {}\hfill
    {}\hfill
    {$\square $}%
    \vspace {0.3cm}%
    \pagebreak [2]%
    \par
}%
\newenvironment{proof}[1]{%
    \vspace{0.3cm}%
    \pagebreak [2]%
    \par%
    \noindent {\bf  Proof~#1\ }}{\qed}%
\newenvironment{remark}{%
    \vspace{0.3cm} \pagebreak [2]%
    \par%
    \refstepcounter{proposition}
    \noindent%
    {\bf Remark~\theproposition\  }}{}%
\begin{document}

\title {Excursion Probabilities of Isotropic and Locally Isotropic Gaussian Random Fields on Manifolds}
\author{Dan Cheng\\ North Carolina State University }

\maketitle

\begin{abstract}
Let $X= \{X(p), p\in M\}$ be a centered Gaussian random
field, where $M$ is a smooth Riemannian manifold. For a suitable compact subset $D\subset M$,
we obtain the approximations to excursion probability $\P\{\sup_{p\in D} X(p) \ge u \}$, as $u\to \infty$,
for two cases: (i) $X$ is smooth and isotropic; (ii) $X$ is non-smooth and locally isotropic. For case (i), the expected
Euler characteristic approximation is formulated explicitly; while for case (ii),
it is shown that the asymptotics is similar to Pickands' approximation on Euclidean
space which involves Pickands' constant and the volume of $D$. These extend the results in \citep{Cheng:2014} from sphere to general Riemannian manifolds.
\end{abstract}

\noindent{\small{\bf Key Words}: Excursion probability; Gaussian fields; Riemannian manifolds; Isotropic; Locally isotropic; Euler characteristic; Pickands' constant.}

\noindent{\small{\bf AMS 2010 Subject Classifications}:\ 60G15, 60G60, 60G70.}

\section{Introduction}
For a Gaussian random field $X= \{X(p), p\in M\}$ living on some parameter space $M$, there is rich literature on studying the \emph{excursion probability} $\P\{\sup_{p\in D} X(p) \ge u \}$ as $u\to \infty$, where $D$ is a suitable compact subset of $M$  \citep{Adler00,AT07,AW09,Piterbarg96}. Traditional research usually focuses on Gaussian random fields living on Euclidean space. However, due to recent development in statistics and related fields such as geoscience, astronomy and neuroscience, Gaussian random fields parameterized on manifolds are becoming more and more useful for modelling many research objects. Motivated by both real applications and theoretical development \citep{MarinucciPecati11,Cheng:2014,Cohen12,Istas13,Malyarenko13,Yaglom61}, it is valuable to extend the study of excursion probability from Euclidean space to manifolds.

When $X$ is smooth and $M$ is a Riemannian manifold, \citet{TTA05} (see also Theorem 14.3.3 in \citep{AT07}) showed that the excursion probability can be approximated by the expected Euler characteristic of the excursion set of $X$ exceeding $u$. This verifies the so called ``Euler Characteristic Heuristic''. The expected Euler characteristic can be formulated by a nice but implicit form (see \citep{TA03} or Theorems 12.4.1 and 12.4.2 in \citep{AT07}), which involves Lipschitz-Killing curvatures of the excursion set. Since the Lipschitz-Killing curvatures depend on both the field $X$ and the parameter space $M$, it is usually hard to obtain their explicit expressions. In this paper, we consider $X$ to be isotropic on $M$ in the sense that its covariance function can be written as $C(p,q)=\rho(d_M^2(p,q))$ for any $p, q \in M$, where $\rho$ is a real function and $d_M$ is the geodesic distance on $M$. It is then showed that the Lipschitz-Killing curvatures of the excursion set can be simplified as depending only on $\rho'(0)$ and the geometry of $D$, see Theorem \ref{Thm:MEC M} below. This leads to a relatively simple formula for the expected Euler characteristic of the excursion set, which approximates the excursion probability with a super-exponentially small error as $u\to \infty$ (see Theorem \ref{Thm:Euler approximation}).

We also study another case when $X$ is non-smooth and $M$ is a Riemannian manifold. In particular, we assume the covariance function of $X$ satisfies the locally isotropic condition \eqref{Eq:Pickands condition M} below. Notice that since $X$ is non-smooth, the expected Euler characteristic approximation is no longer applicable. Instead, we shall employ arguments by charts to study the corresponding field under Euclidean local coordinates and then apply the result in \citep{ChanL06} to find an approximation to the excursion probability. As a related reference, we have found that \citet{MP97} studied the excursion probability over a manifold subset for Gaussian random fields parameterized on Euclidean space. Their theorems can be applied to obtain results similar to Theorem \ref{Thm:Pickands approximation M} in this paper if $X$ is the restriction on $M$ of some Gaussian field defined on Euclidean space of higher dimension. However, it is not clear if every locally isotropic Gaussian field on $M$ can be regarded as a restriction of some Gaussian field on Euclidean space satisfying the local property in (4) in \citep{MP97}. To see this, we refer to explanations in \citep{Cheng:2014} for the case when $M$ is an $N$-dimensional sphere (the case when $M$ is an arbitrary Riemannian manifold will make it even more complicated). On the other hand, in many real applications, it is natural to model a Gaussian field on $M$ directly and then explore its local properties on $M$ but not its extension on Euclidean space of higher dimension. In such sense, condition \eqref{Eq:Pickands condition M} is more natural and useful compared with (4) in \citep{MP97}.

The results in this paper extend those in \citep{Cheng:2014} from sphere to general Riemannian manifolds. The study for excursion probabilities of Gaussian fields on sphere in \citep{Cheng:2014} relies on specific spectral representation of the covariance \citep{Schoenberg42} and the spherical coordinates. To deal with Gaussian fields on general Riemannian manifolds, we explore directly the connection between the Riemannian metric and the (locally) isotropic property of the field to obtain the asymptotics of excursion probabilities (see Theorem \ref{Thm:Pickands approximation M} and Corollary \ref{Cor:Pickands approximation Dk} below). This new approach is more general and powerful.

\section{Smooth Isotropic Gaussian Random Fields on Manifolds}
\label{Sec:smooth}
\subsection{Preliminaries on Riemannian Manifolds}
\label{Sec:manifold}
We first introduce some notations and definitions on Riemannian manifolds. Let $(M,g)$ be an $N$-dimensional smooth Riemannian manifold, where $g$ is the Riemannian metric. Then for each $p\in M$, there is an inner product
\begin{equation*}
\begin{split}
g_p: \ T_pM\times T_pM &\rightarrow \R \\
 (\xi_p,  \sigma_p) &\mapsto g_p(\xi_p,  \sigma_p),
\end{split}
\end{equation*}
where $T_pM$ is the tangent space of $M$ at $p$. To simplify the notation, we denote by $\l \xi_p,  \sigma_p \r$ the inner product $g_p(\xi_p,  \sigma_p)$. In particular, for a coordinate system with bisis $\{\frac{\partial}{\partial x_i} |_p \}_{1\le i, j\le N}$, we let
\[
g_{ij}(p) := g_p\left(\frac{\partial}{\partial x_i} \Big|_p, \frac{\partial}{\partial x_j} \Big|_p\right) = \left\langle \frac{\partial}{\partial x_i} \Big|_p,  \frac{\partial}{\partial x_j} \Big|_p \right\rangle,
\]
which are called the components of $g$ at $p$ under this coordinate system. Denote the $N\times N$ matrix by  $G(p)=(g_{ij}(p))_{1\le i, j\le N}$, which is symmetric and positive definite due to the definition of $g$. For a real-valued smooth function $f$ on $M$, let $f_i = \frac{\partial}{\partial x_i}f$ and $f_{ij} = \frac{\partial^2}{\partial x_i\partial x_j}f$.

Notice that, the Riemannian metric $g$ itself is not a true metric on $M$. However, it does induce a metric on $M$ in the following way. Firstly, we can define the length of a differentiable curve $\gamma: [a, b] \rightarrow M$ by
\[
L(\gamma) = \int_a^b \sqrt{\l \gamma'(t), \gamma'(t)\r} dt.
\]
Then the distance on $M$ induced by the Riemannian metric $g$, denoted by $d_M(\cdot, \cdot)$, is defined by
\[
d_M(p,q) = \inf L(\gamma), \quad p, q\in M,
\]
where the infimum extends over all differentiable curves $\gamma$ beginning at $p \in M$ and ending at $q\in M$.

\subsection{Isotropic Gaussian Random Fields on Riemannian Manifolds}
Let $X=\{X(p), p\in M\}$ be a smooth centered Gaussian random field parameterized over an $N$-dimensional Riemannian manifold $(M,g)$. We define $X$ to be \emph{isotropic} over $M$ if the covariance function satisfies
\begin{equation}\label{Eq:isotropy}
C(p,q) = \rho\left(d_M^2(p,q)\right), \quad p,q\in M,
\end{equation}
where $\rho$ is a real function on $[0,\infty)$. Relation \eqref{Eq:isotropy} implies that $C(p,q)$ only depends on $d_M(p,q)$ and  hence behaves isotropically over $M$. Besides the well-known Euclidean case, we refer to \citep{MarinucciPecati11,Cheng:2014} and \citep{Cohen12} respectively for isotropic Gaussian fields on sphere and hyperbolic space satisfying \eqref{Eq:isotropy}. For general Riemannian manifolds, isotropic Gaussian fields and covariances can be constructed via spectral representations; see \citep{Yaglom61,Yaglom87,Yadrenko83,Malyarenko13}.

As discussed in \citep{AT07} (see Corollaries 11.3.3 and 11.3.5 therein), in order to apply the Kac-Rice metatheorem and the Morse theory to establish the expected Euler characteristic approximation, we need the following smoothness and regularity conditions for $X$.

\begin{itemize}
\item[({\bf H}1).] $X(\cdot) \in C^2(M)$ almost surely and there exist positive constants $K$ and $\eta$ such that
\begin{equation*}
\E(X_{ij}(p)-X_{ij}(q))^2 \leq K |\log (d_M(p,q))|^{-(1+\eta)}, \quad \forall p,q\in M,\ i, j= 1, \ldots, N.
\end{equation*}

\item[({\bf H}2).]  For every $p\in M$, the Gaussian vector $(X_i(p), X_{jk}(p), 1\le i\le N, 1\leq j\leq k\leq N)$ is  non-degenerate.
\end{itemize}
In addition, to make the compact subset $D\subset M$ suitable, we shall assume that $D$ is a regular stratified, locally convex submanifold. We refer to pages 198 and 189 in \citep{AT07} for rigorous definitions of ``regular stratified'' and ``locally convex''. Roughly speaking, we may let $D$ be piecewise smooth and convex.

Next, we introduce the Lipschitz-Killing curvatures (or ``intrinsic volumes'') of $D$. For $j=0, \ldots, {\rm dim}(D)$, the $j$-th Lipschitz-Killing curvature $\mathcal{L}_j (D)$ can be viewed as the measure of the $j$-dimensional boundary of $D$. Generally, if $D$ is a $k$-dimensional, regular stratified, locally convex manifold, then $\mathcal{L}_0 (D)$ is the Euler characteristic, $\mathcal{L}_k (D)$ is the volume and $\mathcal{L}_{k-1} (D)$ is half of the surface area. For the other $\mathcal{L}_j (T)$, $1\le j\le k-2$, we can apply Steiner's formula (see page 142 in \citep{AT07}) to find their values. We refer to \citep{AT07} for the general definition of $\mathcal{L}_j (D)$ and specific examples on evaluating them explicitly as well.

Let $\chi(A_u(X, D))$ be the Euler characteristic of excursion set $A_u(X,D) = \{x\in D: X(p)\geq u\}$ (cf. \citep{AT07}). Denote by $H_j(x)$ the Hermite polynomial of degree $j$, that is,
\[
H_j(x) = (-1)^j e ^{x^2/2} \frac{d^j}{dx^j}\big( e^{-x^2/2} \big).
\]
We obtain the following formula for $\chi(A_u(X, D))$ when $X$ is smooth and isotropic.

\begin{theorem}\label{Thm:MEC M} Let $X=\{X(p), p\in M\}$ be a centered, unit-variance, isotropic Gaussian random
field satisfying \eqref{Eq:isotropy}, $({\bf H}1)$ and $({\bf H}2)$, where $(M, g)$ is a smooth Riemannian manifold. Then, for a compact, regular stratified, locally convex submanifold $D\subset M$,
\begin{equation}\label{Eq:MEC D}
\E \{\chi(A_u(X,D))\} = \sum_{j=0}^{{\rm dim}(D)}  \left(-2\rho'(0)\right)^{j/2} \mathcal{L}_j (D) \beta_j(u),
\end{equation}
where $\mathcal{L}_j (D)$ are the Lipschitz-Killing curvatures of $D$ and
\begin{equation*}
\begin{split}
\beta_j(u) = \left\{
  \begin{array}{l l}
     (2\pi)^{-1/2}\int_u^\infty e^{-x^2/2}dx & \quad \text{if $j=0$,}\\
     (2\pi)^{-(j+1)/2} H_{j-1} (u) e^{-u^2/2} & \quad \text{if $j=1, \ldots, {\rm dim}(D)$}.
   \end{array} \right.
\end{split}
\end{equation*}
\end{theorem}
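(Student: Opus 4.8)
The plan is to obtain \eqref{Eq:MEC D} by specializing the general expected-Euler-characteristic formula of \citet{TTA05} and then using the isotropy hypothesis \eqref{Eq:isotropy} to show that the Riemannian metric induced on $M$ by $X$ is nothing but a constant multiple of $g$. Under $({\bf H}1)$ and $({\bf H}2)$ the Kac--Rice/Morse-theoretic machinery applies (Corollaries 11.3.3 and 11.3.5 together with Theorems 12.4.1--12.4.2 in \citep{AT07}, or \citep{TTA05}), and yields the \emph{exact} identity
$$\E\{\chi(A_u(X,D))\} = \sum_{j=0}^{{\rm dim}(D)} \mathcal{L}_j^{X}(D)\,\beta_j(u),$$
where $\mathcal{L}_j^{X}(D)$ denotes the $j$-th Lipschitz--Killing curvature of $D$ computed with respect to the metric $g^{X}$ \emph{induced on $M$ by $X$}, namely $g^{X}_p(\xi_p,\sigma_p)=\E[(\xi_p X)(\sigma_p X)]$ for $\xi_p,\sigma_p\in T_pM$, and the $\beta_j$ are exactly the Gaussian Euler-characteristic densities appearing in the statement (note $H_0\equiv 1$). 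Thus the whole theorem reduces to evaluating $g^{X}$ and tracking how the $\mathcal{L}_j$ transform under it.

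To compute $g^{X}$, fix $p\in M$ and $\xi_p\in T_pM$ and let $\gamma$ be the geodesic with $\gamma(0)=p$ and $\gamma'(0)=\xi_p$. For $|s|,|t|$ small enough to lie inside the injectivity radius, the sub-arc of $\gamma$ from $\gamma(s)$ to $\gamma(t)$ is the minimizing geodesic, and since geodesics have constant speed this gives the \emph{exact} relation $d_M^2(\gamma(s),\gamma(t))=(s-t)^2\,g_p(\xi_p,\xi_p)$. Hence, using \eqref{Eq:isotropy} and interchanging expectation with differentiation (legitimate since $X\in C^2$ by $({\bf H}1)$),
$$g^{X}_p(\xi_p,\xi_p)=\left.\frac{\partial^2}{\partial s\,\partial t}\,\E[X(\gamma(s))X(\gamma(t))]\right|_{s=t=0}=\left.\frac{\partial^2}{\partial s\,\partial t}\,\rho\big((s-t)^2 g_p(\xi_p,\xi_p)\big)\right|_{s=t=0}.$$
Writing $w(s,t)=(s-t)^2 g_p(\xi_p,\xi_p)$ we have $w(0,0)=0$ and $\partial_s w|_{0,0}=\partial_t w|_{0,0}=0$, so the chain rule eliminates the $\rho''$ term and leaves $g^{X}_p(\xi_p,\xi_p)=\rho'(0)\,\partial_s\partial_t w|_{0,0}=-2\rho'(0)\,g_p(\xi_p,\xi_p)$; the same computation shows that $t\mapsto\rho(t^2 g_p(\xi_p,\xi_p))$ is $C^2$, so $\rho'(0)$ exists, and since $({\bf H}2)$ makes $(X_i(p))_{i\le N}$ non-degenerate with covariance matrix $-2\rho'(0)G(p)$ while $G(p)$ is positive definite, necessarily $-2\rho'(0)>0$. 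Polarizing in $\xi_p$ and letting $p$ range over $M$ yields $g^{X}=-2\rho'(0)\,g$.

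Finally, $g^{X}=\lambda^2 g$ with $\lambda=\sqrt{-2\rho'(0)}$ means that lengths of all $j$-dimensional strata of $D$ get multiplied by $\lambda^{j}$, so the tube/Steiner formula defining the Lipschitz--Killing curvatures (cf. \citep{AT07}) gives $\mathcal{L}_j^{X}(D)=\lambda^{j}\mathcal{L}_j(D)=(-2\rho'(0))^{j/2}\mathcal{L}_j(D)$, where $\mathcal{L}_j(D)$ is now taken with respect to $g$. Substituting into the identity of the first paragraph produces \eqref{Eq:MEC D}. The one genuinely non-routine step is the evaluation of $g^{X}$: the key point is that \eqref{Eq:isotropy} is exactly the hypothesis needed for $g^{X}$ to be proportional to $g$, and the clean way to see it is to restrict $d_M^2$ to a geodesic, where it is \emph{exactly} quadratic rather than merely quadratic to leading order; once this is in hand, the existence and sign of $\rho'(0)$ fall out of $({\bf H}1)$--$({\bf H}2)$ and the remaining steps are quotations of \citep{AT07,TTA05}.
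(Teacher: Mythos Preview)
Your argument is correct and follows the same architecture as the paper: quote Theorem~12.4.2 of \citep{AT07} to reduce to computing the induced metric $g^X$, show $g^X=-2\rho'(0)\,g$, and then scale the Lipschitz--Killing curvatures by $(-2\rho'(0))^{j/2}$. The difference lies in how you compute $g^X$. The paper takes two geodesics $\gamma(t)=\exp_{p_0}(t\xi_{p_0})$ and $\tau(s)=\exp_{p_0}(s\sigma_{p_0})$ and invokes the first variation formula for the energy functional to obtain $\partial_s d_M^2(\gamma(t),\tau(s))|_{s=0}=-2t\langle\sigma_{p_0},\xi_{p_0}\rangle$, thereby computing the off-diagonal term $g^X_{p_0}(\xi_{p_0},\sigma_{p_0})$ directly. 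You instead work along a \emph{single} geodesic, where $d_M^2(\gamma(s),\gamma(t))=(s-t)^2g_p(\xi_p,\xi_p)$ is exact, compute only the quadratic form $g^X_p(\xi_p,\xi_p)$, and recover the bilinear form by polarization. Your route is more elementary---it bypasses the variational calculus entirely---while the paper's route yields the cross term without an appeal to polarization; both reach the same conclusion with the same amount of rigor. Your additional remarks on the existence of $\rho'(0)$ and the strict positivity of $-2\rho'(0)$ from $({\bf H}2)$ are not made explicit in the paper but are welcome.
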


\begin{proof}\ By Theorem 12.4.2 in \citep{AT07},
\begin{equation*}
\begin{split}
\E \{\chi(A_u(X,D))\} = \sum_{j=0}^{{\rm dim}(D)} \mathcal{L}_j^X (D) \beta_j(u),
\end{split}
\end{equation*}
where $\mathcal{L}_j^X (D)$ are Lipschitz-Killing curvatures of $D$ under the Riemannian metric $g^X$ induced by $X$. Specifically (see  page 305 in \citep{AT07}),
\begin{equation*}
g_{p_0}^X(\xi_{p_0}, \sigma_{p_0}) := \E\{(\xi_{p_0} X)\cdot(\sigma_{p_0} X)\}=\xi_{p_0}\sigma_{p_0} C(p,q)|_{p=q=p_0}, \quad \forall p_0 \in M,
\end{equation*}
where $\xi_{p_0}, \sigma_{p_0} \in T_{p_0} M$, the tangent spaces of $M$ at $p_0$. Therefore, to prove \eqref{Eq:MEC D}, we only need to show $\mathcal{L}_j^X (D)=\left(-2\rho'(0)\right)^{j/2} \mathcal{L}_j (D)$ for $j=0, \ldots, {\rm dim}(D)$, where $\mathcal{L}_j (D)$ are original Lipschitz-Killing curvatures of $D$ under the Riemannian metric $g$.

We may choose two geodesic curves on $M$, say $\gamma: [0,\delta]\rightarrow M$ and $\tau: [0,\delta]\rightarrow M$, such that $\gamma(t)=\exp_{p_0}(t\xi_{p_0})$ and $\tau(s)=\exp_{p_0}(s\sigma_{p_0})$, where $\exp$ is the usual exponential mapping on $M$. Here $\delta$ is small enough so that the squared distance function $d_M^2(\gamma(t),\tau(s))$, $t, s\in [0,\delta],$ is smooth.

Let $t\in (0,\delta)$ be fixed. For each $s\in [0,\delta]$, consider the minimizing geodesic $\eta_s: [0, 1] \rightarrow M$ joining $\gamma(t)$ to $\tau(s)$, that is, $\eta_s(0) = \gamma(t)$ and $\eta_s(1)=\tau(s)$. Define the variation $f(s, x)=\eta_s(x)$, where $x\in [0,1]$ and $s\in [0, \delta]$, and define the energy function
\[
E(s) :=\int_0^1 \left\langle \frac{\partial f}{\partial x}(s,x), \frac{\partial f}{\partial x}(s,x) \right\rangle dx = d_M^2(\gamma(t), \tau(s)),
\]
where the last equality is due to the fact that $\eta_s(\cdot)$ is geodesic for each fixed $s$. By the first variation formula for energy (see Proposition 2.4 in Chapter 9 of \citep{do Carmo92}), together with again the fact that $\eta_s(\cdot)$ is geodesic, we obtain
\begin{equation*}
\frac{\partial}{\partial s}d_M^2(\gamma(t),\tau(s)) = E'(s) = 2\l \tau'(s), \exp_{\gamma(t)}^{-1}(\tau(s))\r = 2\l \sigma_{p_0}, \exp_{\gamma(t)}^{-1}(\tau(s))\r,
\end{equation*}
where $\exp_{\gamma(t)}^{-1}(\tau(s))$ denotes the vector $\zeta\in T_{\gamma(t)} M$ such that $\tau(s)=\exp_{\gamma(t)}(\zeta)$. In particular, when $s=0$, $\exp_{\gamma(t)}^{-1}(\tau(s))=-t\xi_{p_0}$. Therefore,
\begin{equation}\label{Eq:energy}
\frac{\partial}{\partial s}d_M^2(\gamma(t), \tau(s))|_{s=0} = -2t\l \sigma_{p_0}, \xi_{p_0} \r.
\end{equation}

It follows from \eqref{Eq:energy} that
\begin{equation*}
\begin{split}
\xi_{p_0}\sigma_{p_0} C(p,q)|_{p=q=p_0} &= \frac{\partial}{\partial t}\frac{\partial}{\partial s} \rho\left(d_M^2(\gamma(t), \tau(s))\right)|_{t=s=0} \\
&= -2\frac{\partial}{\partial t} \left[t\l \sigma_{p_0}, \xi_{p_0} \r\rho'\left(d_M^2(\gamma(t), p_0)\right)\right]|_{t=0}\\
& = -2\rho'(0)\l \sigma_{p_0}, \xi_{p_0} \r.
\end{split}
\end{equation*}
Hence the Riemannian metric $g^X$ induced by $X$ is given by
\begin{equation*}
g_{p_0}^X(\xi_{p_0}, \sigma_{p_0}) = -2\rho'(0) \l \xi_{p_0}, \sigma_{p_0} \r, \quad \forall p_0 \in M, \ \xi_{p_0}, \sigma_{p_0} \in T_{p_0} M.
\end{equation*}
By the definition of Lipschitz-Killing curvatures, one has $\mathcal{L}_j^X (D)=\left(-2\rho'(0)\right)^{j/2} \mathcal{L}_j (D)$,
yielding the desired result.
\end{proof}
\begin{remark} The following are some remarks.
\begin{itemize}
\item Let $M=\R^N$, that is, $\{X(p), p\in \R^N\}$ is an isotropic Gaussian random field on Euclidean space. Then, since ${\rm Var}(X_i(p)) = -2\rho'(0)$, we see that (12.2.19) in \citep{AT07} gives the same result as Theorem \ref{Thm:MEC M}.

\item Let $M$ be the $N$-dimensional unit sphere $\S^N$.  For simplicity (the general case of covariance \citep{Cheng:2014,Schoenberg42} can be handled similarly), we assume the covariance of the isotropic Gaussian field on sphere has the form
    \[
    C(p,q)=\sum_{n=0}^\infty b_n \l p, q\r_{\R^{N+1}}^n, \quad p,q\in \S^N,
    \]
    where $b_n\ge 0$ and $\l \cdot, \cdot\r_{\R^{N+1}}$ denotes the usual inner product in $\R^{N+1}$. Notice that $d_{\S^N}(p,q)$ represents the spherical distance (or the greatest circle distance), therefore,
    \[
    C(p,q)=\sum_{n=0}^\infty b_n \left(\cos d_{\S^N}(x,y)\right)^n =\rho\left(d_{\S^N}^2(x,y)\right),
    \]
    where $\rho(r)=\sum_{n=0}^\infty b_n \left(\cos \sqrt{r}\right)^n$. Then $-2\rho'(0)=\sum_{n=1}^\infty nb_n$, which is the same as the parameter $C'$ in \citep{Cheng:2014}. This indicates that the result for isotropic Gaussian fields on sphere in \citep{Cheng:2014} (especially Lemma 3.5 therein) is a special case of Theorem \ref{Thm:MEC M}.

    \item It can be seen from the proof of Theorem \ref{Thm:MEC M} that, under an orthonormal basis of the Riemannian manifold $(M,g)$, say $\{\frac{\partial}{\partial x_i} |_p \}_{1\le i, j\le N}$, one has
        \[
        \E\{X_i(p)X_j(p)\} = -2\rho'(0) \delta_{ij}, \quad \forall p\in M, \ 1\le i, j\le N,
        \]
        where $\delta_{ij}$ is the Kronecker delta function.
\end{itemize}
\end{remark}

Applying Theorem \ref{Thm:MEC M} and Theorem 14.3.3 in \citet{AT07}, we obtain immediately the following approximation for the excursion probability.
\begin{theorem}\label{Thm:Euler approximation}
Let the conditions in Theorem \ref{Thm:MEC M} hold. Then, following the notation therein, there exists $\alpha_0>0$ such that as $u\to \infty$,
\begin{equation*}
\P\left\{\sup_{p\in D} X(p) \ge u \right\} = \sum_{j=0}^{{\rm dim}(D)}  \left(-2\rho'(0)\right)^{j/2} \mathcal{L}_j (D) \beta_j(u)+ o(e^{-\alpha_0 u^2 - u^2/2}).
\end{equation*}
\end{theorem}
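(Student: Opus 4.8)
The plan is to obtain the statement as a direct combination of the explicit Euler characteristic formula in Theorem~\ref{Thm:MEC M} with the general form of the ``Euler characteristic heuristic'' established as Theorem~14.3.3 in \citep{AT07}, which asserts that for a centered, unit-variance Gaussian field that is sufficiently smooth and non-degenerate on a compact regular stratified manifold, the excursion probability and the expected Euler characteristic of the excursion set differ only by a super-exponentially small quantity. Accordingly, the first step is to check that the restriction $X|_D$ satisfies the hypotheses of that theorem. Since $D$ is a compact, regular stratified, locally convex submanifold of $(M,g)$, it inherits from $g$ the structure of a compact regular stratified Riemannian manifold, and $X|_D$ is again a centered, unit-variance Gaussian field on it; condition $({\bf H}1)$ provides almost-sure $C^2$ sample paths together with the uniform logarithmic modulus-of-continuity bound on the second derivatives that is needed to apply the Kac--Rice metatheorem and Morse theory, while $({\bf H}2)$ provides the non-degeneracy of the joint law of the first- and second-order derivatives. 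These are precisely the conditions recorded after \eqref{Eq:isotropy}, following Corollaries~11.3.3 and~11.3.5 in \citep{AT07}, so nothing new has to be verified.

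Granting the hypotheses, Theorem~14.3.3 of \citep{AT07} yields some $\alpha_0>0$ such that
\[
\P\Big\{\sup_{p\in D} X(p) \ge u\Big\} = \E\{\chi(A_u(X,D))\} + o\big(e^{-\alpha_0 u^2 - u^2/2}\big), \qquad u\to\infty.
\]
It then suffices to substitute the closed form \eqref{Eq:MEC D} of $\E\{\chi(A_u(X,D))\}$ supplied by Theorem~\ref{Thm:MEC M}; this reproduces exactly the claimed expansion, with the functions $\beta_j$ and the factors $(-2\rho'(0))^{j/2}$ as stated there, and with the same error term.

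I do not expect a genuine obstacle in this argument: the substance lies in the verification of the regularity and non-degeneracy hypotheses of the abstract theorem, which was already carried out when $({\bf H}1)$ and $({\bf H}2)$ were introduced, and in the routine observation that passing from $M$ to the submanifold $D$ with its induced Riemannian metric preserves all of the relevant properties (compactness, unit variance, smoothness, non-degeneracy, regular stratification, local convexity). The only minor bookkeeping is that the numerical constant in the error exponent quoted from \citep{AT07} may differ from the one written above, which is immaterial since the claim only asserts the existence of some $\alpha_0>0$.
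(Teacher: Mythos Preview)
Your proposal is correct and matches the paper's own argument: the theorem is obtained by directly combining Theorem~\ref{Thm:MEC M} with Theorem~14.3.3 in \citep{AT07}, with no additional ingredients. The paper does not even spell out the hypothesis-checking you describe, so your write-up is if anything slightly more detailed than the original.
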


The approximation provided in Theorem \ref{Thm:Euler approximation} is very accurate since the error is exponentially smaller than the expected Euler characteristic. In contrast, the approximation for non-smooth Gaussian fields only gives the leading term (i.e., the largest order of polynomials of $u$) of the expected Euler characteristic, which can be seen in \citep{Piterbarg96} and the results in the section below. Therefore, in many statistical applications, especially when using excursion probability for computing p-values, the expected Euler characteristic approximation is much more popular.

\section{Locally Isotropic Gaussian Random Fields on Manifolds}
As before, let $(M,g)$ be a smooth Riemannian manifold of dimension $N$. In this section, we consider a centered Gaussian random field $X=\{X(p), p\in M\}$ such that the covariance function satisfies
\begin{equation}\label{Eq:Pickands condition M}
C(p,q)= 1-c d_M^\alpha (p,q)(1+o(1)) \quad {\rm as }\ d_M (p,q)\to 0,
\end{equation}
where $c>0$ and $\alpha \in (0,2]$. If $X$ is smooth, such as satisfying condition ({\bf H}1), then $\alpha=2$. For $\alpha\in (0,2)$, the field $X$ can be checked to be non-smooth. Covariances satisfying \eqref{Eq:Pickands condition M} behave isotropically in a local sense, therefore we call them (or the Gaussian fields) \emph{locally isotropic}. In particular, if the covariance is isotropic with the representation \eqref{Eq:isotropy}, then \eqref{Eq:Pickands condition M} usually holds. For the Euclidean case, i.e., $M=\R^N$, there is rich literature on studying the asymptotics of excursion probability of Gaussian fields satisfying \eqref{Eq:Pickands condition M} (cf. \citep{Pickands69,QuallsW73,Piterbarg96,ChanL06}). Recently, \citet{Cheng:2014} studied Gaussian fields on sphere satisfying \eqref{Eq:Pickands condition M} by applying the spherical coordinates. Here, we shall use the Riemannian metric itself to show that the locally isotropic property can be generalized as \eqref{Eq:Pickands condition M} for studying the asymptotics of excursion probability of Gaussian fields on Riemannian manifolds.

The main approach is using the argument by (coordinate) charts to transform condition \eqref{Eq:Pickands condition M} to an equivalent condition in Euclidean space (see Lemma \ref{Lem:manidold-Euclidean} below). We first recall some elementary terms for manifolds, which can be found in many textbooks such as \citep{AT07,do Carmo92}. A \emph{chart} on $M$ is a pair $(U, \varphi)$, where $U\subset M$ is open and $\varphi: U \rightarrow \varphi(U)\subset \R^N$ is a homeomorphism. If a collection of charts, say $(U_i, \varphi_i)_{i\in I}$, gives a covering of $M$, i.e., $\cup_{i\in I} U_i= M$, then it is call an \emph{atlas} for $M$. The component functions $x_1, \ldots, x_N$ of a chart $(U, \varphi)$, defined by $\varphi(p) = (x_1(p), \ldots, x_N(p))$, are called the \emph{local coordinates} on $U$.

For two functions $f$ and $h$, we say $f(t) \sim h(t)$ as $t\to t_0\in [-\infty, +\infty]$ if $\lim_{t\to t_0} f(t)/g(t) =1$. Denote by $\|\cdot\|$ the usual Euclidean norm.

\begin{lemma}\label{Lem:manidold-Euclidean}
Let $p, q\in U$, where $p$ is fixed and $(U, \varphi)$ is a chart of $M$. Then as $d_M(p,q) \to 0$,
\[
d_M^2(p,q) \sim \sum_{i, j=1}^N g_{ij}(p)[x_i(q)-x_i(p)][x_j(q)-x_j(p)] = \|G^{1/2}(p)\cdot(\varphi(q)-\varphi(p))\|^2,
\]
where $G^{1/2}(p)$ is the square root of the positive definite matrix $G(p)=(g_{ij}(p))_{1\le i, j\le N}$ defined in Section \ref{Sec:manifold}.
\end{lemma}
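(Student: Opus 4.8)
Write $\Delta x:=\varphi(q)-\varphi(p)\in\R^N$, with components $\Delta x_i=x_i(q)-x_i(p)$. The plan is to deduce the asymptotic equivalence for $d_M^2(p,q)$ from the matching two-sided bound
\[
\frac{1}{1+o(1)}\,\|G^{1/2}(p)\Delta x\|\ \le\ d_M(p,q)\ \le\ (1+o(1))\,\|G^{1/2}(p)\Delta x\|\qquad\text{as }q\to p,
\]
which is equivalent to the stated claim since $d_M(p,q)\to 0$ means $q\to p$ and squaring preserves such an equivalence (both members are positive for $q\ne p$). The displayed algebraic identity is immediate: $\|G^{1/2}(p)\Delta x\|^2=\Delta x^{\mathsf T}G^{1/2}(p)G^{1/2}(p)\Delta x=\Delta x^{\mathsf T}G(p)\Delta x=\sum_{i,j}g_{ij}(p)\Delta x_i\Delta x_j$, using that $G(p)$ is symmetric positive definite. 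All the work takes place inside a fixed, relatively compact coordinate ball $B=\varphi^{-1}(B_{\R^N}(\varphi(p),r))\subset U$, and since $q\to p$ we may assume $q\in B$.

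For the upper bound I would take the curve $\gamma(t)=\varphi^{-1}(\varphi(p)+t\,\Delta x)$, $t\in[0,1]$, whose image under $\varphi$ is the Euclidean segment from $\varphi(p)$ to $\varphi(q)$. Its Riemannian length is $L(\gamma)=\int_0^1\big(\sum_{i,j}g_{ij}(\gamma(t))\,\Delta x_i\Delta x_j\big)^{1/2}\,dt$, and since the $g_{ij}$ are continuous and $\gamma(t)\to p$ uniformly in $t$ as $q\to p$, this equals $\big(\sum_{i,j}g_{ij}(p)\Delta x_i\Delta x_j\big)^{1/2}(1+o(1))=\|G^{1/2}(p)\Delta x\|(1+o(1))$. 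Since $d_M(p,q)\le L(\gamma)$, the upper bound follows.

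The lower bound is where the real work lies. Fix $\varepsilon>0$; by continuity of $G$ at $p$, after shrinking $r$ we may assume $(1+\varepsilon)^{-1}G(p)\preceq G(x)\preceq(1+\varepsilon)G(p)$ as quadratic forms for every $x\in B$. Let $\sigma$ be any curve in $M$ from $p$ to $q$. If $\sigma$ stays in $B$, comparing its integrand pointwise with the constant metric $G(p)$ gives $L(\sigma)\ge(1+\varepsilon)^{-1/2}L_{G(p)}(\sigma)$, where $L_{G(p)}$ is length measured with the flat metric $G(p)$ in the chart; the linear substitution $y=G^{1/2}(p)x$ turns this flat metric into the Euclidean one, so $L_{G(p)}(\sigma)\ge\|G^{1/2}(p)\Delta x\|$ (a segment being shortest). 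If instead $\sigma$ exits $B$, its sub-arc up to the first exit time lies in $\overline B$ and joins $\varphi(p)$ to the Euclidean sphere of radius $r$ in coordinates, so by the same comparison $L(\sigma)\ge(1+\varepsilon)^{-1/2}\lambda_{\min}(G(p))^{1/2}\,r=:c_0>0$, a constant independent of $q$. Since $\|G^{1/2}(p)\Delta x\|\to 0$ as $q\to p$, for $q$ in a small enough neighborhood of $p$ the second case cannot realize the infimum, and taking the infimum over $\sigma$ there yields $d_M(p,q)\ge(1+\varepsilon)^{-1/2}\|G^{1/2}(p)\Delta x\|$. As $\varepsilon>0$ was arbitrary (each $\varepsilon$ giving the estimate on a full neighborhood of $p$), the lower bound follows. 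I expect this step — ruling out minimizing curves that leave the chart, and keeping the quadratic-form sandwich uniform on $B$ — to be the main obstacle; the rest is the chain rule and linear algebra.

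An alternative that sidesteps the issue of curves leaving the chart is to pass to Riemannian normal coordinates $\psi$ centered at $p$: in a normal neighborhood $d_M(p,q)=\|\psi(q)\|$ exactly, and writing $\psi(q)=\psi\circ\varphi^{-1}(\varphi(q))$ and Taylor expanding at $\varphi(p)$ gives $d_M(p,q)=\|A\,\Delta x\|(1+o(1))$, where $A$ is the differential of $\psi\circ\varphi^{-1}$ at $\varphi(p)$. Since the metric is Euclidean in normal coordinates at the centre, $A^{\mathsf T}A=G(p)$, whence $\|A\,\Delta x\|=\|G^{1/2}(p)\Delta x\|$ and the claim drops out.
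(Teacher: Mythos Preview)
Your proof is correct and, in its core, parallels the paper's: both compute the length of a curve from $p$ in local coordinates and use continuity of $g_{ij}$ at $p$ to identify the leading behavior with the quadratic form $\sum_{i,j}g_{ij}(p)\Delta x_i\Delta x_j$. The paper simply computes $L(\gamma([0,\delta]))$ for an arbitrary curve $\gamma$ with $\gamma(0)=p$, finds it asymptotic to $\|G^{1/2}(p)\Delta x\|$, and then writes ``This implies'' the claim for $d_M$; it does not explicitly separate upper and lower bounds, nor address curves leaving $U$.

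Where you genuinely go further is in the lower bound: your quadratic-form sandwich $(1+\varepsilon)^{-1}G(p)\preceq G(x)\preceq(1+\varepsilon)G(p)$ on a small ball, together with the case split on whether a competing curve exits $B$, makes rigorous exactly the step the paper leaves implicit. Your alternative via Riemannian normal coordinates is a cleaner and genuinely different route: it replaces the infimum-over-curves argument by the exact identity $d_M(p,q)=\|\psi(q)\|$ in a normal neighborhood and a first-order Taylor expansion of the transition map, reducing everything to the linear-algebra fact $A^{\mathsf T}A=G(p)$. That argument is shorter and avoids the curve-leaving-the-chart issue entirely, at the cost of invoking the existence and basic properties of normal coordinates.
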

\begin{proof}\
Let $\gamma: [0,1] \rightarrow M$ be a smooth curve on $M$ such that $\gamma(0)=p$ and $\gamma([0,1])\subset U$. For $\delta> 0$, denote by $L(\gamma([0,\delta]))$ the length of the segment between $\gamma(0)$ and $\gamma(\delta)$. Since
\[
\gamma'(t) = \sum_{i=1}^N \frac{dx_i(\gamma(t))}{dt}\frac{\partial}{\partial x_i},
\]
we obtain that, as $\delta\to 0$,
\begin{equation*}
\begin{split}
L(\gamma([0,\delta])) &= \int_0^\delta \sqrt{g_{\gamma(t)}(\gamma'(t),\gamma'(t))}dt \\
&= \int_0^\delta \left(\sum_{i,j=1}^Ng_{ij}(\gamma(t))\frac{dx_i(\gamma(t))}{dt}\frac{dx_j(\gamma(t))}{dt}\right)^{1/2}dt\\
&\sim \left(\sum_{i,j=1}^Ng_{ij}(p)[x_i(\gamma(\delta))-x_i(p)][x_j(\gamma(\delta))-x_j(p)]\right)^{1/2}.
\end{split}
\end{equation*}
This implies
\[
d_M(p,q) \sim \left(\sum_{i, j=1}^N g_{ij}(p)[x_i(q)-x_i(p)][x_j(q)-x_j(p)]\right)^{1/2} = \|G^{1/2}(p)\cdot(\varphi(q)-\varphi(p))\|.
\]
\end{proof}

\begin{remark}
Let $M$ be the $N$-dimensional unit sphere $\S^N$. Following the notation of spherical coordinates in \citep{Cheng:2014}, we have
\[
G(p) = {\rm diag}\left(1, (\sin\theta_1)^2, \ldots, \prod_{i=1}^{N-1}(\sin \theta_i)^2\right).
\]
Therefore, Lemma 2.1 in \citep{Cheng:2014} is a special case of Lemma \ref{Lem:manidold-Euclidean}.
\end{remark}

Denote by $H_{\alpha,N}$ the usual Pickands' constant in $\R^N$, that is
\begin{equation*}
H_{\alpha,N}=\lim_{K\to \infty}K^{-N}\int_0^\infty e^u\P\left\{\sup_{s\in [0,K]^N} Z(s)\ge u \right\}du,
\end{equation*}
where $\{Z(s): s\in[0,\infty)^N\}$ is a Gaussian random field such that
\begin{equation*}
%\begin{split}
%Z(0)&=0,\\
\E \big(Z(s) \big)=-\|s\|^\alpha,\ \ \ \ \
{\rm Cov} \big(Z(s),Z(v) \big)=\|s\|^\alpha + \|v\|^\alpha -\|s-v\|^\alpha.
%\end{split}
\end{equation*}

Let $T\subset \R^N$ be a bounded $N$-dimensional Jordan measurable set (i.e., the boundary of $T$ has $N$-dimensional Lebesgue measure 0). Let $Y = \{Y(t), t \in \mathbb R^N\}$ be a centered Gaussian field with covariance function $C_Y$ such that for every fixed $t\in T$,
\begin{equation}\label{Eq:locally isotropic property}
C_Y(t,s)= 1-\|B(t)(s-t)\|^\alpha (1+o(1)) \quad {\rm as}\ \|s-t\|\to 0,
\end{equation}
where $B(t)$ is a non-degenerate $N\times N$ matrix and $\alpha \in (0,2]$.

We will make use of the following result, which is a direct consequence of Theorem 2.1 in \citep{ChanL06} (see also Theorem 2.2 in \citep{Cheng:2014}) and Lemma 2.3 in \citep{Cheng:2014}.

\begin{theorem}\label{Thm:Pickands approximation}
Let $T\subset \R^N$ be a bounded $N$-dimensional Jordan measurable set. Suppose the centered Gaussian random
field $\{Y(t), t\in \R^N\}$ satisfies condition (\ref{Eq:locally isotropic property}). Then as $u\to \infty$,
\begin{equation*}
\P\left\{\sup_{t\in T} Y(t) \ge u \right\} = \left(\int_T|{\rm det}(B(t))|dt\right) H_{\alpha,N} u^{2N/\alpha} \Psi(u) (1+o(1)).
\end{equation*}
Here and in the sequel, $ \Psi(u) = (2\pi)^{-1/2}\int_u^\infty e^{-v^2/2}dv$.
\end{theorem}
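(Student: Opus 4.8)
Since \eqref{Eq:locally isotropic property} constrains the covariance only locally and the matrix $B(t)$ varies with $t$, the natural strategy is a localization argument: cut $T$ into small pieces on which $B$ is nearly constant, straighten the local structure on each piece by a linear change of coordinates so that the standard Pickands asymptotics become available, and then reassemble the pieces by a Riemann-sum argument, the variation of $B(t)$ producing the Jacobian factor $|\det B(t)|$ inside the integral.

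Concretely, I would fix $h>0$, overlay $\R^N$ with the grid of cubes of side $h$, let $D_1,\dots,D_m$ be the grid cubes contained in $T$, and put $D_0 := T\setminus\bigcup_{k\ge1}D_k$; by Jordan measurability of $T$ this boundary layer is contained in a Jordan measurable set whose Lebesgue measure tends to $0$ as $h\to0$. For each $k\ge1$ pick $t_k\in D_k$ and introduce the rescaled field $\wt Y_k(v) := Y\big(t_k+B(t_k)^{-1}v\big)$ on $\wt D_k := B(t_k)(D_k-t_k)$, a set of volume $|\det B(t_k)|\,|D_k|$ on which $\sup\wt Y_k=\sup_{D_k}Y$. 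Writing $t=t_k+B(t_k)^{-1}v\in D_k$, condition \eqref{Eq:locally isotropic property} gives, for each fixed $v$,
\[
C_{\wt Y_k}(v,w)=1-\big\|B(t)B(t_k)^{-1}(w-v)\big\|^\alpha(1+o(1))\qquad\text{as }\|w-v\|\to0,
\]
and by continuity of $B(\cdot)$ one has $B(t)B(t_k)^{-1}=I+O(h)$ uniformly over $D_k$, so $\wt Y_k$ obeys the standard local-isotropy condition of index $\alpha$ up to a $1+O(h)$ distortion of the scale $\|w-v\|^\alpha$. Applying Theorem 2.1 of \citep{ChanL06} (equivalently Theorem 2.2 of \citep{Cheng:2014}) to $\wt Y_k$ on $\wt D_k$, and invoking the linear-change-of-variables/scaling lemma, Lemma 2.3 of \citep{Cheng:2014}, both to absorb the $1+O(h)$ factor and to pin down the constant $H_{\alpha,N}$, yields as $u\to\infty$
\[
\P\Big\{\sup_{t\in D_k}Y(t)\ge u\Big\}=|\det B(t_k)|\,|D_k|\,H_{\alpha,N}\,u^{2N/\alpha}\,\Psi(u)\,(1+o(1))\,(1+O(h)).
\]

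For the upper bound I would use $\P\{\sup_TY\ge u\}\le\P\{\sup_{D_0}Y\ge u\}+\sum_{k\ge1}\P\{\sup_{D_k}Y\ge u\}$, estimate the $D_0$ term by the upper asymptotics of \citep{ChanL06} applied to a Jordan measurable enlargement of $D_0$ of volume $o(1)$, and observe that $\sum_{k\ge1}|\det B(t_k)|\,|D_k|$ is a Riemann sum for $\int_T|\det B(t)|\,dt$; letting $u\to\infty$ and then $h\to0$ recovers the claimed asymptotics as an upper bound. For the lower bound I would use the Bonferroni inequality
\[
\P\Big\{\sup_TY\ge u\Big\}\ge\sum_{k\ge1}\P\Big\{\sup_{D_k}Y\ge u\Big\}-\sum_{k\ne l}\P\Big\{\sup_{D_k}Y\ge u,\ \sup_{D_l}Y\ge u\Big\},
\]
the single sum again producing the correct Riemann sum. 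The step I expect to be the main obstacle is the double sum: one must show that the joint exceedance probabilities over distinct cubes are of smaller order $o(u^{2N/\alpha}\Psi(u))$, which requires non-degeneracy of $(Y(t),Y(s))$ for $t,s$ in non-adjacent cubes together with uniform-in-$h$ control of the local behavior near the cube boundaries — in practice exactly the estimates already contained in the proof of Theorem 2.1 of \citep{ChanL06}, so that the genuinely new content here is the local change of coordinates and the Riemann-sum bookkeeping carried out through Lemma 2.3 of \citep{Cheng:2014}.
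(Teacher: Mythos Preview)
Your plan is correct in spirit and would go through, but it does far more work than the paper does. In the paper this theorem is not proved at all: it is stated as ``a direct consequence of Theorem 2.1 in \citep{ChanL06} (see also Theorem 2.2 in \citep{Cheng:2014}) and Lemma 2.3 in \citep{Cheng:2014}.'' The point is that Chan--Lai's Theorem 2.1 already allows the local matrix to vary with $t$ and already outputs an integral constant over $T$; Lemma 2.3 of \citep{Cheng:2014} is then invoked once to rewrite that constant in the form $\int_T|\det B(t)|\,dt\cdot H_{\alpha,N}$. No grid, no freezing of $B(t)$, no Bonferroni reassembly is needed at this level, because all of that localization machinery lives inside the proof of \citep{ChanL06}, not in the present paper.

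What your approach effectively does is unwrap Chan--Lai's proof one layer: you cut $T$ into cubes, push each cube through a linear change of variables so that the rescaled field has local matrix $I+O(h)$, and then apply Chan--Lai's theorem on each cube. That works, but it applies the localization twice (once by you, once inside the black box) and forces you to manage the $u\to\infty$ then $h\to0$ limit interchange and the double sum yourself. The upshot is that your route is a valid alternative proof, arguably more self-contained since it shows where the Jacobian $|\det B(t)|$ and the Riemann-sum structure come from, but the paper's route is simply a one-line citation: apply \citep{ChanL06} to $Y$ on $T$ and translate the constant via Lemma 2.3 of \citep{Cheng:2014}.
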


Now we are ready to prove our main result of this section.
\begin{theorem}\label{Thm:Pickands approximation M}
Let $X=\{X(p), p\in M\}$ be a centered Gaussian random field satisfying condition (\ref{Eq:Pickands condition M}), where $(M, g)$ is an $N$-dimensional smooth Riemannian manifold. Let $D\subset M$ be an $N$-dimensional smooth compact submanifold on $M$. Then as $u\to \infty$,
\begin{equation*}
\P\left\{\sup_{p\in D} X(p) \ge u \right\} = {\rm Vol}(D)c^{N/\alpha}H_{\alpha,N} u^{2N/\alpha} \Psi(u) (1+o(1)),
\end{equation*}
where ${\rm Vol}(D)$ denotes the volume of $D$ and $c>0$ is the constant in (\ref{Eq:Pickands condition M}).
\end{theorem}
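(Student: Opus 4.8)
The plan is to push the problem into Euclidean coordinate charts, apply Theorem~\ref{Thm:Pickands approximation} chart by chart, and recombine the pieces by a finite inclusion--exclusion argument. Since $D$ is compact, cover it by finitely many charts $(U_i,\varphi_i)$, $i=1,\ldots,m$, and for each one set $Y_i(t):=X(\varphi_i^{-1}(t))$, $t\in\varphi_i(U_i)$. Fix $t_0$ and put $p_0=\varphi_i^{-1}(t_0)$; since $\varphi_i^{-1}$ is a homeomorphism, $\|s-t_0\|\to0$ forces $d_M(\varphi_i^{-1}(s),p_0)\to0$, so combining \eqref{Eq:Pickands condition M} with Lemma~\ref{Lem:manidold-Euclidean} gives
\[
C_{Y_i}(t_0,s)=1-c\,d_M^\alpha\big(p_0,\varphi_i^{-1}(s)\big)(1+o(1))=1-\big\|c^{1/\alpha}G^{1/2}(p_0)(s-t_0)\big\|^\alpha(1+o(1))
\]
as $\|s-t_0\|\to0$, where $G(p_0)=(g_{ij}(p_0))$ is the metric matrix in the coordinates of the $i$-th chart. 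Hence $Y_i$ satisfies \eqref{Eq:locally isotropic property} with $B(t)=c^{1/\alpha}G^{1/2}(\varphi_i^{-1}(t))$, which is non-degenerate since $G$ is positive definite, and $|{\rm det}\,B(t)|=c^{N/\alpha}\sqrt{{\rm det}\,G(\varphi_i^{-1}(t))}$.

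Next I would fix a partition $D=\bigcup_{i=1}^m E_i$, disjoint up to a null set, with $\overline{E_i}\subset U_i$ and each $\varphi_i(E_i)$ a bounded $N$-dimensional Jordan measurable set (possible because $D$ is a smooth $N$-dimensional submanifold). Applying Theorem~\ref{Thm:Pickands approximation} to $Y_i$ on $\varphi_i(E_i)$ and invoking the Riemannian volume identity ${\rm Vol}(E_i)=\int_{\varphi_i(E_i)}\sqrt{{\rm det}\,G(\varphi_i^{-1}(t))}\,dt$ yields
\[
\P\Big\{\sup_{p\in E_i}X(p)\ge u\Big\}=c^{N/\alpha}\,{\rm Vol}(E_i)\,H_{\alpha,N}\,u^{2N/\alpha}\Psi(u)(1+o(1)),\qquad u\to\infty.
\]
Summing these finitely many estimates and using $\P\{\sup_D X\ge u\}\le\sum_i\P\{\sup_{E_i}X\ge u\}$ gives the upper bound $\P\{\sup_D X\ge u\}\le c^{N/\alpha}{\rm Vol}(D)H_{\alpha,N}u^{2N/\alpha}\Psi(u)(1+o(1))$.

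For the matching lower bound, fix $\eps>0$ and choose disjoint compact sets $F_i\subset E_i$ that are pairwise at positive geodesic distance and satisfy ${\rm Vol}(D\setminus\bigcup_i F_i)<\eps$. Bonferroni's inequality gives
\[
\P\Big\{\sup_{p\in D}X(p)\ge u\Big\}\ge\sum_{i=1}^m\P\Big\{\sup_{F_i}X\ge u\Big\}-\sum_{i<j}\P\Big\{\sup_{F_i}X\ge u,\ \sup_{F_j}X\ge u\Big\}.
\]
The first sum equals $c^{N/\alpha}\big(\sum_i{\rm Vol}(F_i)\big)H_{\alpha,N}u^{2N/\alpha}\Psi(u)(1+o(1))\ge c^{N/\alpha}({\rm Vol}(D)-\eps)H_{\alpha,N}u^{2N/\alpha}\Psi(u)(1+o(1))$. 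For a cross term, the event forces $\sup_{(p,q)\in F_i\times F_j}\tfrac12(X(p)+X(q))\ge u$, and $(p,q)\mapsto\tfrac12(X(p)+X(q))$ is centered Gaussian with variance $\tfrac12(1+C(p,q))\le\tfrac12(1+r_{ij})$, where $r_{ij}:=\sup_{F_i\times F_j}C<1$ by compactness of $F_i,F_j$, their positive separation, and the fact that $C(p,q)<1$ for $p\ne q$; the Borell--TIS inequality \citep{AT07} then bounds the cross term by $\exp(-u^2/(1+r_{ij})+o(u^2))=o(u^{2N/\alpha}\Psi(u))$. Thus $\liminf_{u\to\infty}\P\{\sup_D X\ge u\}/(u^{2N/\alpha}\Psi(u))\ge c^{N/\alpha}({\rm Vol}(D)-\eps)H_{\alpha,N}$, and letting $\eps\downarrow0$ completes the argument.

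The step I expect to be most delicate is the chart computation together with the bookkeeping: one must verify that composing \eqref{Eq:Pickands condition M} with the local expansion of Lemma~\ref{Lem:manidold-Euclidean} produces exactly the form \eqref{Eq:locally isotropic property} required by Theorem~\ref{Thm:Pickands approximation}, and then arrange the $F_i$ to be simultaneously disjoint, positively separated, chart-supported, and almost exhaustive of $D$. Once $C<1$ off the diagonal is in hand the cross-term estimate is routine, so the heart of the matter is the passage between the Riemannian and Euclidean descriptions, which is precisely what Lemma~\ref{Lem:manidold-Euclidean} is designed to supply.
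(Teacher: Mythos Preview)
Your argument is correct and follows the same overall strategy as the paper: push into charts via Lemma~\ref{Lem:manidold-Euclidean}, apply Theorem~\ref{Thm:Pickands approximation} in local coordinates, and recombine by a union bound above and Bonferroni below. The chart computation and the identification $|\det B(t)|=c^{N/\alpha}\sqrt{\det G(\varphi_i^{-1}(t))}$ leading to the Riemannian volume element match the paper exactly.

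The only substantive difference is in how the Bonferroni cross terms are handled. The paper decomposes $D=\bigcup_j D_j$ so that any two \emph{adjacent} pieces lie in a common chart, and kills their cross term algebraically via $\P\{A,B\}=\P\{A\}+\P\{B\}-\P\{A\cup B\}$ together with three applications of the chart asymptotic (since ${\rm Vol}(D_j)+{\rm Vol}(D_{j'})-{\rm Vol}(D_j\cup D_{j'})=0$, the joint probability is forced to be $o(u^{2N/\alpha}\Psi(u))$); non-adjacent pairs are then dispatched by Borell--TIS exactly as you do. Your alternative---shrink to pairwise separated compacta $F_i$ and let the volume deficit $\eps\downarrow 0$---sidesteps the additivity trick at the price of an extra limiting step. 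Both devices are standard and either one closes the argument.

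One caveat worth noting: you invoke ``the fact that $C(p,q)<1$ for $p\ne q$'', but this is not contained in the stated hypotheses; condition~\eqref{Eq:Pickands condition M} only governs $C$ near the diagonal. The paper's proof relies on the same tacit assumption when it applies Borell--TIS to non-adjacent pairs, so this is a shared implicit hypothesis rather than a flaw specific to your route.
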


\begin{proof}\
Let $(U_i, \varphi_i)_{i\in I}$ be an atlas of $M$. Since $D$ is compact, it has a finite covering $(U_i, \varphi_i)_{i\in I_0}$, where $I_0\subset I$ is a finite index set. For $i\in I_0$, define $\bar{X} : \varphi_i(U_i)\subset \R^N \rightarrow \R^N$ by $\bar{X}=X\circ \varphi_i^{-1}$. By Lemma \ref{Lem:manidold-Euclidean}, the covariance of $\bar{X}$, denoted by $\bar{C}(\cdot, \cdot): \varphi_i(U_i)\times \varphi_i(U_i) \subset \R^N\times \R^N \rightarrow \R$, satisfies the following property: for every fixed $\varphi_i(p)\in \varphi_i(U_i)$, as $\|\varphi_i(q)-\varphi_i(p)\|\to 0$,
\begin{equation*}
\begin{split}
\bar{C}(\varphi_i(p),\varphi_i(q)) &= 1-c \|(G^{1/2}\circ \varphi_i^{-1})(\varphi_i(p))\cdot(\varphi_i(q)-\varphi_i(p))\|^\alpha (1+o(1))\\
&= 1- \|c^{1/\alpha}(G^{1/2}\circ \varphi_i^{-1})(\varphi_i(p))\cdot(\varphi_i(q)-\varphi_i(p))\|^\alpha (1+o(1)).
\end{split}
\end{equation*}
We make a decomposition $D=\cup_{j=1}^m D_j$ such that any neighboring $D_j$ and $D_{j'}$ belong to a same chart. Suppose $D_j \subset U_i$, applying Theorem \ref{Thm:Pickands approximation} yields
\begin{equation}\label{Eq:app-UD}
\begin{split}
\P&\left\{\sup_{p\in D_j} X(p) \ge u \right\} = \P\left\{\sup_{x\in \varphi_i(D_j)} \bar{X}(x) \ge u \right\}\\
&=\left(\int_{\varphi_i(D_j)}\Big|{\rm det}\left(c^{1/\alpha}\left(G^{1/2}\circ \varphi_i^{-1}\right)(x)\right)\Big|dx\right) H_{\alpha,N} u^{2N/\alpha} \Psi(u) (1+o(1))\\
&=\left(\int_{\varphi_i(D_j)}\big|{\rm det}\left(\left(G\circ \varphi_i^{-1}\right)(x)\right)\big|^{1/2}dx\right) c^{N/\alpha}H_{\alpha,N} u^{2N/\alpha} \Psi(u) (1+o(1))\\
&={\rm Vol}(D_j) c^{N/\alpha}H_{\alpha,N} u^{2N/\alpha} \Psi(u) (1+o(1)),
\end{split}
\end{equation}
where the last line is due to the definition of volume on $(M, g)$.

It is obvious that
\begin{equation}\label{Eq:upper-bound}
\begin{split}
\P\left\{\sup_{p\in D} X(p) \ge u \right\} \le \sum_{j=1}^m \P\left\{\sup_{p\in D_j} X(p) \ge u \right\}.
\end{split}
\end{equation}
On the other hand, by the Bonferroni inequality,
\begin{equation}\label{Eq:lower-bound}
\begin{split}
\P&\left\{\sup_{p\in D} X(p) \ge u \right\} \ge \sum_{j=1}^m \P\left\{\sup_{p\in D_j} X(p) \ge u \right\}  - \sum_{j\ne j'} \P\left\{\sup_{p\in D_j} X(p) \ge u, \sup_{q\in D_{j'}} X(q) \ge u \right\}.
\end{split}
\end{equation}
If $D_j$ and $D_{j'}$ are not adjacent, i.e., $d_M(D_j, D_{j'})>0$, then it follows from the Borell-TIS inequality (cf. Theorem 2.1.1 in \citep{AT07}) that
\begin{equation*}
\begin{split}
&\lim_{u\to \infty}\frac{\P\left\{\sup_{p\in D_j} X(p) \ge u, \sup_{q\in D_{j'}} X(q) \ge u \right\}}{u^{2N/\alpha}\Psi(u)}\\
&\quad \le \lim_{u\to \infty}\frac{\P\left\{\sup_{p\in D_j, q\in D_{j'} } [X(p) + X(q)]/2 \ge u \right\}}{u^{2N/\alpha}\Psi(u)} = 0.
\end{split}
\end{equation*}
If $D_j$ and $D_{j'}$ are adjacent, then by our assumption, there exists a chart $(U_i, \varphi_i)$ containing both $D_j$ and $D_{j'}$. Since
\begin{equation*}
\begin{split}
\P&\left\{\sup_{p\in D_j} X(p) \ge u, \sup_{q\in D_{j'}} X(q) \ge u \right\}\\
&= \P\left\{\sup_{p\in D_j} X(p) \ge u\right\} + \P\left\{\sup_{q\in D_{j'}} X(q) \ge u \right\} - \P\left\{\sup_{p\in D_j \cup D_{j'}} X(p) \ge u\right\},
\end{split}
\end{equation*}
applying \eqref{Eq:app-UD} to the last three terms respectively yields that the joint excursion probability above is $o(u^{2N/\alpha}\Psi(u))$. Therefore, the last sum in \eqref{Eq:lower-bound} is $o(u^{2N/\alpha}\Psi(u))$.

Now, combining \eqref{Eq:app-UD} with \eqref{Eq:upper-bound} and \eqref{Eq:lower-bound}, we obtain
\begin{equation*}
\begin{split}
\P\left\{\sup_{p\in D} X(p) \ge u \right\} &= \sum_{j=1}^m {\rm Vol}(D_j) c^{N/\alpha}H_{\alpha,N} u^{2N/\alpha} \Psi(u) (1+o(1))\\
&={\rm Vol}(D) c^{N/\alpha}H_{\alpha,N} u^{2N/\alpha} \Psi(u) (1+o(1)).
\end{split}
\end{equation*}
\end{proof}

In Theorem \ref{Thm:Pickands approximation M}, the submanifold $D\subset M$ is assumed to be of the same dimension as $M$. This is because under such assumption we can apply the Riemannian metric $g$ on $M$ directly. If ${\rm dim}(D)<N$, we may turn to the restriction of $g$ on the tangent space of $D$, which can be regarded as the Riemannian metric on $D$ induced from $(M,g)$. We denote such \emph{induced Riemannian metric} on $D$ by $\tilde{g}$. Then we have following result as an extension of Theorem \ref{Thm:Pickands approximation M}.

\begin{corollary}\label{Cor:Pickands approximation Dk}
Let $X=\{X(p), p\in M\}$ be a centered Gaussian random field satisfying condition (\ref{Eq:Pickands condition M}), where $(M, g)$ is an $N$-dimensional smooth Riemannian manifold. Let $D\subset M$ be a $k$-dimensional smooth compact submanifold on $M$, where $0<k\le N$. Then as $u\to \infty$,
\begin{equation*}
\P\left\{\sup_{p\in D} X(p) \ge u \right\} = {\rm Vol}(D)c^{k/\alpha}H_{\alpha,k} u^{2k/\alpha} \Psi(u) (1+o(1)),
\end{equation*}
where ${\rm Vol}(D)$ denotes the volume of $D$ under the induced Riemannian metric $\tilde{g}$ and $c>0$ is the constant in (\ref{Eq:Pickands condition M}).
\end{corollary}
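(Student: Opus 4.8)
The plan is to run the argument of Theorem \ref{Thm:Pickands approximation M}, but working in charts of $M$ \emph{adapted to} $D$ and invoking Theorem \ref{Thm:Pickands approximation} in dimension $k$ rather than $N$. Since $D$ is a smooth compact submanifold of $M$, around every $p\in D$ there is a chart $(U,\varphi)$ of $M$ with $\varphi(U\cap D)=\varphi(U)\cap(\R^{k}\times\{0\}^{N-k})$; writing $\bar\varphi$ for $\varphi$ followed by the projection onto the first $k$ coordinates, the pair $(U\cap D,\bar\varphi)$ is a chart of $D$ with local coordinates $x_1,\dots,x_k$. In these coordinates the induced Riemannian metric $\tilde{g}$ on $D$ has components $\tilde{g}_{ij}(p)=g_{ij}(p)$ for $1\le i,j\le k$, i.e.\ its matrix $\tilde{G}(p)$ is the upper-left $k\times k$ block of $G(p)$, and hence is symmetric positive definite. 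Compactness of $D$ gives a finite cover by such adapted charts $(U_i,\varphi_i)_{i\in I_0}$.

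First I would identify the local covariance of the transplanted field. Fix one adapted chart $(U,\varphi)$ as above, with induced chart $(U\cap D,\bar\varphi)$ and matrix field $\tilde{G}$, and set $\bar X:=X\circ\bar\varphi^{-1}$ on $\bar\varphi(U\cap D)\subset\R^{k}$. For $p,q\in U\cap D$ the last $N-k$ coordinates of $\varphi(q)-\varphi(p)$ vanish, so Lemma \ref{Lem:manidold-Euclidean}, combined with the identity $v^{\top}G(p)v=w^{\top}\tilde{G}(p)w$ valid for $v=(w,0)$ with $w\in\R^{k}$, yields
\[
d_M^{2}(p,q)\sim\sum_{i,j=1}^{k}\tilde{g}_{ij}(p)\,[x_i(q)-x_i(p)][x_j(q)-x_j(p)]=\big\|\tilde{G}^{1/2}(p)\,(\bar\varphi(q)-\bar\varphi(p))\big\|^{2}
\]
as $d_M(p,q)\to0$, equivalently as $\|\bar\varphi(q)-\bar\varphi(p)\|\to0$. (This is just Lemma \ref{Lem:manidold-Euclidean} applied to the Riemannian manifold $(D,\tilde{g})$; it records that for nearby points of $D$ the ambient distance $d_M$ agrees to leading order with the intrinsic distance on $D$.) Substituting this into \eqref{Eq:Pickands condition M} exactly as in the proof of Theorem \ref{Thm:Pickands approximation M}, the covariance $\bar C$ of $\bar X$ satisfies, for each fixed $x\in\bar\varphi(U\cap D)$ and as $\|y-x\|\to0$,
\[
\bar C(x,y)=1-\big\|c^{1/\alpha}\big(\tilde{G}^{1/2}\circ\bar\varphi^{-1}\big)(x)\,(y-x)\big\|^{\alpha}(1+o(1)),
\]
that is, condition \eqref{Eq:locally isotropic property} holds in dimension $k$ with the non-degenerate matrix $B(x)=c^{1/\alpha}\big(\tilde{G}^{1/2}\circ\bar\varphi^{-1}\big)(x)$.

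Next I would take a finite decomposition $D=\cup_{j=1}^{m}D_j$ in which any two adjacent pieces lie in a common adapted chart and every coordinate image $\bar\varphi_i(D_j)$ (for $D_j\subset U_i$) is bounded and Jordan measurable in $\R^{k}$, and apply Theorem \ref{Thm:Pickands approximation} --- with $N$ replaced by $k$ --- to $\bar X$ on each $\bar\varphi_i(D_j)$. Since $|\det B(x)|=c^{k/\alpha}\,|\det(\tilde{G}\circ\bar\varphi_i^{-1})(x)|^{1/2}$ and $\int_{\bar\varphi_i(D_j)}|\det(\tilde{G}\circ\bar\varphi_i^{-1})(x)|^{1/2}\,dx={\rm Vol}(D_j)$ by the definition of the volume of $(D,\tilde{g})$, this gives $\P\{\sup_{p\in D_j}X(p)\ge u\}={\rm Vol}(D_j)\,c^{k/\alpha}H_{\alpha,k}\,u^{2k/\alpha}\Psi(u)(1+o(1))$. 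The pieces are then glued by the Bonferroni bounds as in \eqref{Eq:upper-bound}--\eqref{Eq:lower-bound}: for non-adjacent $D_j,D_{j'}$ (i.e.\ $d_M(D_j,D_{j'})>0$) the Borell-TIS inequality forces the joint exceedance probability to be $o(u^{2k/\alpha}\Psi(u))$, and for adjacent $D_j,D_{j'}$ the inclusion-exclusion identity $\P\{\sup_{D_j}X\ge u,\sup_{D_{j'}}X\ge u\}=\P\{\sup_{D_j}X\ge u\}+\P\{\sup_{D_{j'}}X\ge u\}-\P\{\sup_{D_j\cup D_{j'}}X\ge u\}$, together with the estimate just obtained applied to $D_j$, $D_{j'}$ and $D_j\cup D_{j'}$ in their common chart, again makes it $o(u^{2k/\alpha}\Psi(u))$. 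Summing over $j$ and using $\sum_{j}{\rm Vol}(D_j)={\rm Vol}(D)$ completes the proof.

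The one step that really needs care is the reduction in the second paragraph: one must check that, restricted to $D$, the ambient geodesic distance $d_M$ is governed to leading order by the \emph{induced} metric $\tilde{g}$ --- equivalently, that the second fundamental form of $D$ in $M$ enters only at higher order --- so that the matrix $B$ in \eqref{Eq:locally isotropic property} is built from $\tilde{G}$ and the resulting change of variables reproduces exactly the Riemannian volume of $D$ with respect to $\tilde{g}$. Granting this (it follows from Lemma \ref{Lem:manidold-Euclidean} through the adapted charts), everything else is a routine transcription of the proof of Theorem \ref{Thm:Pickands approximation M} with $N$ replaced by $k$.
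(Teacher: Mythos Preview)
Your proposal is correct and follows essentially the same route as the paper: pass to charts of $D$, verify via Lemma \ref{Lem:manidold-Euclidean} that the transplanted field satisfies \eqref{Eq:locally isotropic property} in $\R^k$ with matrix $c^{1/\alpha}\tilde{G}^{1/2}$, apply Theorem \ref{Thm:Pickands approximation} in dimension $k$, and glue by Bonferroni exactly as in Theorem \ref{Thm:Pickands approximation M}. Your use of adapted charts of $M$ (rather than generic charts of $D$) makes transparent the one point the paper leaves implicit in its ``similarly to Lemma \ref{Lem:manidold-Euclidean}'' --- namely, that the ambient distance $d_M$ restricted to $D$ is governed to first order by the induced metric $\tilde g$ --- so your version is, if anything, slightly more careful on exactly the step you flagged.
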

\begin{proof}\
Let $(V_i, \phi_i)_{i\in I_0}$ be a finite atlas of $D$. For $i\in I_0$, defnie $\bar{X} : \phi_i(V_i)\subset \R^k \rightarrow \R^k$ by $\bar{X}=X\circ \phi_i^{-1}$. Similarly to Lemma \ref{Lem:manidold-Euclidean}, we have that the covariance of $\bar{X}$, denoted by $\bar{C}(\cdot, \cdot): \phi_i(V_i)\times \phi_i(V_i) \subset \R^k\times \R^k \rightarrow \R$, satisfies the following property: for every fixed $\phi_i(p)\in \phi_i(V_i)$, as $\|\phi_i(q)-\phi_i(p)\|\to 0$,
\begin{equation*}
\begin{split}
\bar{C}(\phi_i(p),\phi_i(q)) &= 1-c \|(\tilde{G}^{1/2}\circ \phi_i^{-1})(\phi_i(p))\cdot(\phi_i(q)-\phi_i(p))\|^\alpha (1+o(1))\\
&= 1- \|c^{1/\alpha}(\tilde{G}^{1/2}\circ \phi_i^{-1})(\phi_i(p))\cdot(\phi_i(q)-\phi_i(p))\|^\alpha (1+o(1)),
\end{split}
\end{equation*}
where $\tilde{G}(p)=(\tilde{g}_{ij}(p))_{1\le i, j\le k}$. The desired result then follows from similar arguments in the proof of Theorem \ref{Thm:Pickands approximation M}.
\end{proof}

\begin{remark}
If $X=\{X(p), p\in M\}$ is isotropic such that conditions ({\bf H}1) and ({\bf H}2) are fulfilled, then the covariance $C(p,q) = \rho\left(d_M^2(p,q)\right)$ satisfies condition \eqref{Eq:Pickands condition M} with $\alpha=2$ and $c=-\rho'(0)$. Since Pickands' constant $H_{2,N}=\pi^{-N/2}$, one can check that the approximation in Theorem \ref{Thm:Pickands approximation M} only provides the leading term of the approximation in Theorem \ref{Thm:Euler approximation}. This is because, Theorem \ref{Thm:Pickands approximation M} does not take into account the boundary effect of $D$. Therefore, the expected Euler characteristic approximation in Theorem \ref{Thm:Euler approximation} is much more accurate for smooth isotropic Gaussian fields. However, Theorem \ref{Thm:Pickands approximation M} is still very useful since it provides an approximation to non-smooth locally isotropic Gaussian fields, for which Theorem \ref{Thm:Euler approximation} is not applicable due to lack of smoothness.
\end{remark}

In this paper, for the non-smooth case, we only study covariances with a simple local structure \eqref{Eq:Pickands condition M}. However, similarly to the Euclidean case \citep{Piterbarg96}, one can also investigate the asymptotics for Gaussian covariances with more general local structures, such as letting the constants $c$ and $\alpha$ in \eqref{Eq:Pickands condition M} be functions $c(p)$ and $\alpha(p)$ depending on $p$ \citep{Istas13}. We leave these problems for future research.

\bibliographystyle{plain}
\begin{small}

\end{small}

\bigskip

\begin{quote}
\begin{small}
\textsc{Dan Cheng}\\
Department of Statistics, North Carolina State
University\\
2311 Stinson Drive, Campus Box 8203, Raleigh, NC 27695, U.S.A.\\
E-mail: \texttt{dcheng2@ncsu.edu}
\end{small}
\end{quote}

\end{document}